\newtheorem{theorem}{Theorem}
\newtheorem*{mtheorem}{Main Theorem}
\newtheorem{lemma}{Lemma}
\newtheorem{remark}{Remark}
\theoremstyle{definition}
\numberwithin{equation}{section}
\newcommand{\R}{\mathbb{R}}
\newcommand{\Z}{\mathbb{Z}}
\newcommand{\N}{\mathbb{N}}
\renewcommand{\S}{\mathbb{S}}
\newcommand{\Om}{\Omega}
\newcommand{\e}{\varepsilon}
\newcommand{\HH}{\mathcal{H}}
\newcommand{\eff}{\mathrm{eff}}
\newcommand{\dx}{\,\mathrm{d}x}
\newcommand{\MS}{M\hspace{-.1em}S}
\mathchardef\emptyset="001F
\title[Toughening by crack deflection]
{Toughening by crack deflection in the homogenization\\ of brittle composites with soft inclusions}
\author[M. Barchiesi]{M. Barchiesi}
\address[M. Barchiesi]{Dip. di Matematica ed Applicazioni, Universit\`{a} di Napoli ``Federico II'', 
Via Cintia, 80126 Napoli, Italy}
\email{barchies@gmail.com}
\date{November 7, 2016}
\begin{document}
\maketitle

\begin{center}
\begin{minipage}{11.5cm}
\small{
\noindent {\bf Abstract.} 
We present a simple example of toughening mechanism in the homogenization of composites with soft inclusions,
produced by crack deflection at microscopic level. We show that the mechanism is connected to the irreversibility of 
the crack process. Because of that it cannot be detected through the standard homogenization tool of the $\Gamma$-convergence.}
\end{minipage}
\end{center}
\bigskip

\section{introduction}

\noindent
In this paper we focus on the toughening mechanism in composites, and more precisely on that produced by crack-path deflection
at microscopic level. It occurs whenever interactions between the crack and an inclusion cause the crack site to evolve
in the matrix out of the path expected in an homogeneous material. 
In this way, the energy required to open and enlarge a crack in the material increases.

Our aim is to replicate this mechanism within 
the framework of the weak formulation of Griffith's theory of brittle fracture (see~\cite{AB95}) with ad hoc model. 
According to the weak formulation, we assumed that the displacement $u$ belongs to the class of 
\emph{Special functions with Bounded Variation}. Within this functional framework, the crack site
is identified with the set $S_u$ of the discontinuities of $u$, the orientation of the crack is 
described by the normal $\nu_u$ to $S_u$, and the opening of the crack is identified with the jump $[u]$.
At microscopic level, the equilibrium configurations of the system are reached by minimizing the sum $F_\e(u)$ of
the elastic energy stored in the uncracked part of the body, and the surface energy dissipated to open the crack.
The small parameter $\e$ takes into account the size of the heterogeneity of the composite.
It is important to determine, at macroscopic level, the effective material properties, i.e., to replace 
the composite with an ideal homogeneous material. Among the various properties, we are interested in the toughness.
Since the analysis rests on the study of equilibrium states, or minimizers, 
of the energy $F_\e$, it is natural to use the $\Gamma$-convergence as homogenization tool in order 
to describe such an effective property. However, the toughening mechanisms cannot be captured by the 
$\Gamma$-limit $F$ of the family $(F_\e)$, as the size $\e$ of the heterogeneities goes to zero.
On the other hand, our analysis enlightens what is the missing information, i.e., that the mechanism is originated 
by the \emph{irreversibility of the crack process}. When we add this constraint, then the asymptotic behavior
of the family $(F_\e)$ reproduces the toughening.

Our model is very simple: it is a composite constituted by a brittle matrix with soft inclusions 
arranged at microscopic level in a sort of chessboard structure (see Figure~\ref{fig1}).
The matrix and the soft inclusions have different elastic moduli, but the same toughness, 
here normalized to one. 

At the microscale the surface energy part of $F_\e$ depends only on the length of the crack $\HH^1(S_u)$, while 
at the macroscale the $\Gamma$-limit $F$ is of cohesive type, in the meaning that the surface energy depends 
also on the opening of the crack:
\begin{equation*}
\int_{S_u}g([u],\nu_u)\,\mathrm{d}\HH^1
\end{equation*}
for a certain surface energy density $g$. 
From the physical point of view, the fracture energy is not completely dissipated at crack iniziation but, 
due to the interaction between the crack's faces, also during the opening of the crack.

Because of that the surface energy associate to different displacements with 
the same crack site could be different. Assume for instance to have a piecewise constant displacement $u_t$
having a horizontal crack, i.e., $\nu_u$ is constantly equal to the vertical direction $e$, and with opening $[u]=t$ 
between the crack's faces. 
How is $F(u)$ determined? From the operative point of view, we have to  build a family of displacements $(u_\e)$
converging to $u$ on one side, and minimizing the energy $F_\e$ on the other one. Then $F(u)$ will be the limit
of $(F_\e(u_\e))$ as $\e$ goes to zero. Now, when $t$ is small, at the microscopic level the soft inclusions can
be stretched paying a few amount of bulk energy also in case of high gradients. In a certain sense, from the energetic
point of view, the material behaves as if there are perforations in place of soft inclusion. Because of that, the best way
to approximate $u$ is with a zig-zag configuration (see Figures \ref{fig2}-\ref{fig3-bis}), 
i.e., a displacement $u_\e$ having crack site going from a soft inclusion to another one (so, in particular, not horizontal) and 
that stretches the soft regions without breaking them (a sort of bridges between the two opposite faces of the macroscopic crack). 
In particular this shows that the limit model $F$ has a positive activation threshold $g(0^+,e)=1/\sqrt{2}$ strictly smaller 
than that of $F_\e$, that it is one (as expected, because the presence of the soft inclusions). 
On the other hand, when $t'$ is large, it is no longer energetically convenient to stretch the soft inclusions instead of 
breaking them. Because of that the best way to approximate $u_{t'}$ is with $u_{t'}$ itself. Indeed $g(t',e)=1$ for $t'$ larger than
a certain threshold. So, the $\Gamma$-limit does no detect any increment of the toughness!
However, the point is that if $u_{t'}$ is an evolution of $u_t$, then in building the approximation $(v_\e)$ for $u_{t'}$ 
we have to take into account the irreversibility of the crack process, i.e., at every fixed $\e$ the crack site of $v_\e$
has to contain the crack site of $u_\e$:
\begin{equation*}
S_{v_\e}\supset S_{u_\e}.
\end{equation*}
If we add this constraint, then we cannot set $v_\e=u_{t'}$ but we can only modify the previous sequence $u_\e$ by keeping
the zig-zag configuration in the matrix and extending the crack inside the soft inclusions.  
In this way we obtain an effective surface energy density $g_{\eff}$ such that $g_{\eff}(t',e)=1/2+1/\sqrt{2}>1=g(t',e)$ 
for $t'$ large: there is an increment in resistance to large crack-opening and to further growth.

\section{Setting of the problem and presentation of the results}

\noindent
Let $\Om$ be an open bounded subset of $\R^2$. The space of special functions of bounded variation on $\Om$ 
will be denoted by $SBV(\Om)$. For the general theory we refer to \cite{Amb00}. 
For every $u\in SBV(\Om)$, $\nabla u$ denotes the \emph{approximate gradient} of $u$, 
$S_u$ the \emph{approximate discontinuity set} of $u$ (the crack site), and $\nu_u$ the \emph{generalized normal} 
to $S_u$, which is defined up to the sign. If $u^+$ and $u^-$ are the traces of $u$ on the sides of $S_u$ determined 
by $\nu_u$, the difference $u^+-u^-$ is called the \emph{jump} of~$u$ (the opening of the crack) and is denoted by~$[u]$. 

Our ambient space is the subspace of $SBV(\Om)$ given by
\begin{equation*}
SBV^2(\Om):=\{u\in SBV(\Om)\colon \nabla u\in L^2(\Om;\R^2) \text{ and } \HH^1(S_u)<+\infty\}.
\end{equation*}
We consider also the larger space of \emph{generalized special functions of bounded variation} on $\Om$, $GSBV(\Om)$, 
which is made of all the integrable functions $u:\Om\rightarrow\R$ whose truncations $u^m:=(u\wedge m)\vee (-m)$ belong 
to $SBV(\Om)$ for every $m\in \N$. 
In analogy with the case of $SBV$ functions, we say that $u\in GSBV^2(\Om)$ if $u\in GSBV(\Om)$, $\nabla u \in L^2(\Om;\R^2)$ 
and $\HH^1(S_u)<+\infty$. 

We also recall the definition of the Mumford-Shah functional 
\begin{equation*}
\MS(u,\Om):=\begin{cases}
\displaystyle\int_\Om |\nabla u|^2\dx+\HH^1(S_u) & \text{if } u\in GSBV^2(\Om),
\cr
+\infty & \text{otherwise in } L^1(\Om).
\end{cases}
\end{equation*}
By \cite[Theorem 4.36]{Amb00}, the $\MS$ functional is $L^1$-lower semicontinuous on $GSBV^2(\Om)$.

For $r>0$ we denote by $Q_r$ the square with side-length $r$, centered at the origin, i.e.,  
$Q_r:=(-r/2,r/2)^2$; while we simply write $Q$ in place of $Q_1$.
Finally, we indicate by $u_t$ the function on $Q$ defined by
\begin{equation*}
u_t:=t\chi_{(-1/2,1/2)\times(0,1/2)}.
\end{equation*}

\medskip

In what follows the $\Gamma$-convergence of functionals is always understood with respect to the strong $L^1$-topology. 
For the general theory about $\Gamma$-convergence we refer to the short presentation in \cite{B06} and the references therein.

\medskip

Let us introduce our model. We set (see Figure \ref{fig1})
\begin{equation*}\begin{split}
D&:=\overline Q_{\frac14} \cup \Bigl( \overline Q_{\frac18} \pm \bigl(\tfrac{7}{16},\tfrac{7}{16}\bigr) \Bigr) \cup \Bigl( \overline Q_{\frac18} \pm \bigl(\tfrac{7}{16},-\tfrac{7}{16}\bigr) \Bigr),\\
P&:=\R^2 \setminus \bigcup_{i\in\Z^2} (D+i).
\end{split}\end{equation*}

\begin{figure}
\centering
\includegraphics[width=11cm]{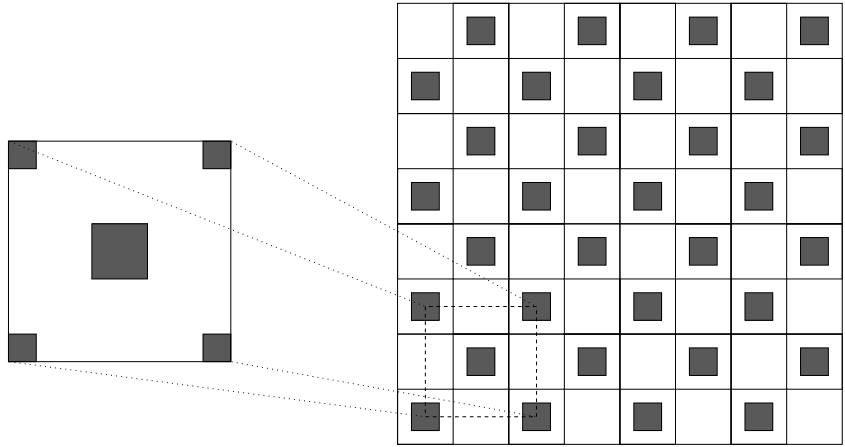}
\caption{In gray, the sets $D$ (on the left) and $\R^2\setminus P$ (on the right).}
\label{fig1}
\end{figure}

We are interested in the asymptotic behavior of the 
functionals $F_\e:L^1(\Omega)\to[0,+\infty]$ defined as
\begin{equation}\label{F-eps}
F_\e(u,\Om):=
\begin{cases}
\displaystyle{\int_{\Om \cap \e P}|\nabla u|^2\dx+\e\int_{\Om\setminus\e P}|\nabla u|^2\dx+\HH^1(S_u)} 
& \text{if }\ u\in SBV^2(\Om), \cr
+\infty & \text{otherwise in } L^1(\Om).
\end{cases}
\end{equation}
In the setting of linearized elasticity and antiplane shear, $\Om$ represents the cross section of a cylindrical body
in its reference configuration, while $F_\e(u,\Om)$ represents the energy corresponding to a displacement $u\colon \Om \to \R$.
The body is a periodic brittle composite made of two constituents having different elastic properties. 
The constituent located in $\Om\setminus\e P$ has elastic modulus represented by the vanishing 
sequence $\e$. For this reason, in what follows, $\Om\setminus\e P$ is referred as the \emph{soft inclusions}. 

\begin{figure}
\centering
\psfrag{1}{$p_1$}
\psfrag{2}{$p_2$}
\psfrag{3}{$p_3$}
\psfrag{4}{$p_4$}
\includegraphics[width=11cm]{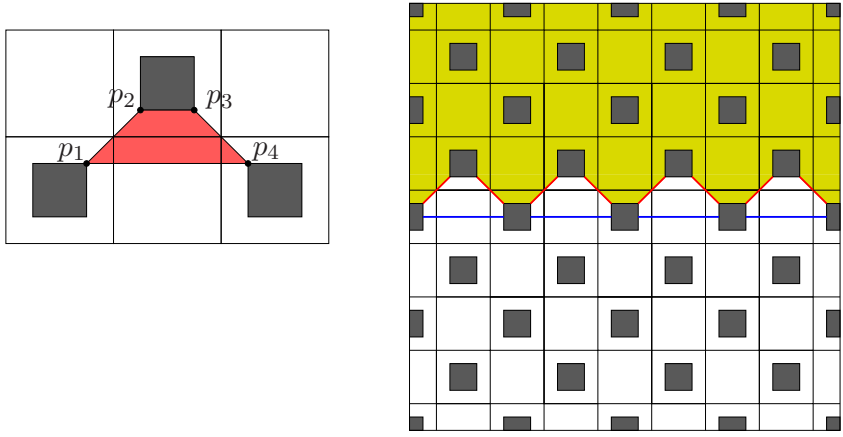}
\caption{In red, the trapezoid $T$ (on the left) and the ``zig-zag'' configuration (on the right).}
\label{fig2}
\end{figure}

We also consider the functionals $\hat F_\e:L^1(\Omega)\to[0,+\infty]$ given by
\begin{equation*}
\hat{F}_\e(u,\Om):=
\begin{cases}
\displaystyle{\int_{\Om \cap \e P}|\nabla u|^2\dx+\HH^1(S_u\cap\Om \cap \e P)}
 & \text{if }u|_{\Om\cap \e P}\in SBV^2(\Om\cap \e P), \\
+\infty &\text{otherwise in }L^1(\Om).
\end{cases}
\end{equation*}
From the physical point of view $\Om\setminus\e P$ represents a \emph{perforation}. 
The asymptotic behavior of functionals like $\hat F_\e$ has been extensively studied in \cite{Bar10,CS07,FGP07}.
Specifically, it has been shown that $(\hat{F}_\e)$ $\Gamma$-converges to
\begin{equation*}
\hat{F}(u,\Om):=
\begin{cases}
\displaystyle{\int_{\Om}f(\nabla u)\dx+\int_{S_u}\hat{g}(\nu_u)\,\mathrm{d}\HH^1} & \text{if }u\in GSBV^2(\Om), \\
+\infty &\text{otherwise in }L^1(\Om),
\end{cases}
\end{equation*}
where $f:\R^2\to[0,+\infty)$ and $\hat{g}:\S^1\to[0,+\infty)$ satisfy
\begin{equation}\begin{split}\label{homcoerc}
c_1|\xi|^2\leq f(\xi)\leq |\xi|^2
\quad&\text{ for every } \xi\in\R^2,\\
c_2\leq \hat{g}(\nu)\leq 1
\hspace{27pt}&\text{ for every } \nu\in\S^1,
\end{split}\end{equation}
for some constants $c_1,c_2{>}0$ only depending on $P$. Moreover, denoted by $e$ the (unitary) vertical vector,
the homogenization formula for $\hat g$ (see \cite[Theorem 4]{Bar10}) gives $\hat{g}(e)=1/\sqrt{2}$. Indeed, 
\begin{equation}\label{cell formula}
\hat{g}(e)=\lim_{\e\rightarrow 0^+}\inf\bigl\{\HH^1(S_{w}) \colon w\in SBV_{0,1}(Q\cap\e P) \bigr\},
\end{equation}
where $SBV_{0,1}(Q\cap\e P)$ is the family of functions $w\in SBV(Q\cap\e P)$ such that $\nabla w=0$ a.e.
in $Q\cap\e P$, with $w(x)=1$ [respect. $0$] on a neighborhood of $\partial Q\cap \{x_2\ge0\}$
[respect. $\partial Q\cap \{x_2<0\}$].
The functions $w\in SBV_{0,1}(Q\cap\e P)$ having the shortest discontinuity set are those such that $S_w$
connects in diagonals two close perforations. Among all the possible configurations, let us consider
the simplest one. We define the trapezoid $T$ of vertices 
$p_1:=(1/8,1/8), p_2:=(3/8,3/8), p_3:=(3/8,5/8), p_4:=(7/8,1/8)$ (see Figure \ref{fig2}), and the sets 
\begin{equation}\begin{split}\label{zig-zag}
Z&:=[0,1)\times[\tfrac18,+\infty)\setminus T,\\
Z_\e&:=Q\cap \e\bigcup_{i\in\Z}\bigl(Z+(i,0)\bigr).
\end{split}\end{equation}
Then, if $\e^{-1}$ is an integer, the function $w=\chi_{Z_\e}$ belongs to $SBV_{0,1}(Q\cap\e P)$. 
Its discontinuity set is the the ``zig-zag'' configuration in Figure \ref{fig2} (in red) and $\HH^1(S_w)=1/\sqrt{2}$.
If $\e^{-1}$ is not an integer, it is enough to slightly modify $w$ in a neighborhood of the points $(-1/2,0)$ and $(1/2,0)$,
possibly increasing $S_w$ of a quantity vanishing as $\e$.

If instead we take $w=\chi_{(-1/2,1/2)\times(0,1/2)}$, then $S_w$ is as in Figure \ref{fig2} (in blue) and $\HH^1(S_w)=3/4$.
Therefore, if the discontinuity set is horizontal, then it is longer. This will be \emph{crucial} in our analysis. 

\medskip

The asymptotic behavior of functionals like $F_\e$ has been instead studied only more recently in \cite{BLZ}. 
Specifically, it can be shown that for $(F_\e)$ the following result holds. Since the microgeometry 
considered in \cite{BLZ} is slightly different from the one considered here, we give a short proof highlighting 
the steps that differ from the original one.

\begin{theorem}
For every decreasing sequence of positive numbers converging to zero, there exists a subsequence $(\e_{k})$ 
such that $(F_{\e_{k}})$ $\Gamma$-converges to a functional 
$F\colon L^1(\Om) \rightarrow [0,+\infty]$ of the form
\begin{equation*}
F(u,\Om):=
\begin{cases}
\displaystyle{\int_{\Om}f(\nabla u)\dx+\int_{S_u}g([u],\nu_u)\,\mathrm{d}\HH^1} & \text{if } u\in GSBV^2(\Om), \cr
+\infty & \text{otherwise in } L^1(\Om),
\end{cases}
\end{equation*}
where $f$ is as in \eqref{homcoerc} and $g:\R\times\S^1\to[0,+\infty)$ is a Borel function satisfying the following properties:
\begin{itemize}
\item[(i)] for every $t\neq0$ and $\nu\in \mathbb S^1$
\begin{equation*}
\hat{g}(\nu)\leq  g(t,\nu)\leq 1;
\end{equation*}
\item[(ii)] for any fixed $\nu \in \mathbb S^1$, $g(\cdot,\nu)$ is nondecreasing and left-continuous in $(0,+\infty)$ 
and satisfies the symmetry condition $g(-t,-\nu)=g(t,\nu)$;
\smallskip
\item[(iii)] for every $t>0$,  $g(\cdot,e)$ satisfies the estimate from above
\begin{equation}\label{crescita1}
g(t,e)\leq\tfrac1{\sqrt{2}}+2\sqrt{2}\,t.
\end{equation}
In particular, $g({0^+},e)=\hat{g}(e)=1/\sqrt{2}$. Moreover, there exists a threshold $t_0>0$ such that
\begin{equation} \label{crescita2}
g(t,e)=1  \quad \text{for} \ t\ge t_0 .
\end{equation}
\end{itemize}
\end{theorem}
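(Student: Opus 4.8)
The plan is to derive the structure of $F$ from the by-now standard $\Gamma$-convergence machinery for free-discontinuity energies (essentially as in \cite{BLZ}) and then to read off the geometry-dependent properties (i)--(iii) from ad hoc comparisons and one explicit recovery construction. For the structure: since $F_\e\le\MS$ for $\e\le1$, the sublevel sets are $L^1$-compact and metrizable, so a De Giorgi--Letta/compactness argument yields a subsequence $(\e_k)$ along which $F_{\e_k}(\cdot,\Om)$ $\Gamma$-converges for every open $\Om$, the limit being, for each fixed $u$, the restriction of a Borel measure. Passing the sandwich $\hat F_\e\le F_\e\le\MS$ to the limits gives $\hat F\le F\le\MS$; hence $F$ is finite exactly on $GSBV^2$, it inherits the coercivity bounds of $\hat F$, and --- arguing as in \cite{BLZ}, the only concentration of the weighted gradients taking place on rectifiable subsets of the soft phase --- one obtains the integral representation $F(u,\Om)=\int_\Om f(\nabla u)\dx+\int_{S_u}g(u^+,u^-,\nu_u)\,\mathrm d\HH^1$, with $f$ the same density as for $\hat F$ (the sandwich gives $\hat f\le f\le|\cdot|^2$, and extending an $\hat F_\e$-recovery sequence from $\e P$ into the soft inclusions with controlled gradient costs only an $O(\e)$ of weighted soft energy, so $f\le\hat f$ too). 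Invariance of $F_\e$ under $u\mapsto u+c$ forces $g$ to depend on $(u^+,u^-)$ only through $[u]$, and well-posedness of the surface integral gives $g(-t,-\nu)=g(t,\nu)$.

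Property (i) is then immediate: testing $F\le\MS$ and $\hat F\le F$ on the pure-jump functions with opening $t$ and normal $\nu$ (whose bulk energy vanishes) gives $\hat g(\nu)\le g(t,\nu)\le1$, since the surface density of $\hat F$ is $\hat g$, independent of the opening. For (ii): the symmetry in the opening comes from $F_\e(-u,\cdot)=F_\e(u,\cdot)$; monotonicity follows by truncation, for if $0<t<t'$ and $u_\e\to u_{t'}$ is a recovery sequence then $v_\e:=(u_\e\wedge t)\vee0$ converges to $u_t$ in $L^1$ with $|\nabla v_\e|\le|\nabla u_\e|$ and $S_{v_\e}\subset S_{u_\e}$, hence $F_\e(v_\e,Q)\le F_\e(u_\e,Q)$ and $g(t,e)=F(u_t,Q)\le F(u_{t'},Q)=g(t',e)$ (and likewise for every $\nu$); left-continuity is then automatic, because $F$ is $L^1$-lower semicontinuous and $u_{t_n}\to u_t$ when $t_n\uparrow t$, so $g(t,e)\le\liminf_n g(t_n,e)=g(t^-,e)$, which with monotonicity gives equality.

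For the upper bound \eqref{crescita1} I would use the zig-zag recovery sequence. In $Q\cap\e P$ take $u_\e$ equal to $t$ above and $0$ below the diagonal polygonal path joining consecutive soft squares exactly as in the cell problem \eqref{cell formula}, so that $u_\e\to u_t$ in $L^1$ and the matrix part of its jump set contributes only the length $\HH^1(S_w)=1/\sqrt2$. On each soft inclusion crossed by the path the jump would now be counted, so instead of cracking through it we extend the crack a short way into the inclusion and interpolate continuously between $0$ and $t$ over the remainder; a direct estimate of the crack and of the (soft-weighted) elastic energy shows this costs $O(\e t^2)$ per inclusion, hence, summing over the $O(1/\e)$ inclusions along the path, $\limsup_\e F_\e(u_\e,Q)\le\tfrac1{\sqrt2}+Ct^2$ for a geometric constant $C$. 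Together with $g(t,e)\le1$ (take $u_\e\equiv u_t$) and the elementary inequality $\min\{\tfrac1{\sqrt2}+Ct^2,\,1\}\le\tfrac1{\sqrt2}+2\sqrt2\,t$ for all $t>0$ (this is where the slope $2\sqrt2$ is used), this gives \eqref{crescita1}; combined with (i) and the left-continuity it yields $g(0^+,e)=\hat g(e)=1/\sqrt2$.

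The remaining assertion \eqref{crescita2} --- that $g(t,e)=1$ for $t$ large --- is the delicate point, and I expect it to be the main obstacle. Since $g(t,e)\le1$ is already known, one must show the reverse inequality, i.e.\ that no competitor beats the flat crack once the opening is large. I would argue by coarea: if $u_\e\to u_t$ in $L^1(Q)$ then, after truncating so that $0\le u_\e\le t$, for a.e.\ level $s\in(0,t)$ the essential boundary $\Gamma_\e^s$ of $\{u_\e>s\}$ separates $\{x_2<0\}$ from $\{x_2>0\}$ in $Q$ (because $\{u_\e>s\}\to\{x_2>0\}$ in $L^1$), hence projects onto all of $(-1/2,1/2)$, so that $\HH^1(\Gamma_\e^s)\ge1-o(1)$; splitting $\Gamma_\e^s$ into its part contained in $S_{u_\e}$ and its ``regular'' part, integrating in $s\in(t/4,3t/4)$, and relating the regular part to $\int|\nabla u_\e|$ via the coarea formula, one gets $\HH^1(S_{u_\e})\ge1-\tfrac2t\int_{\{t/4<u_\e<3t/4\}}|\nabla u_\e|-o(1)$. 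The crux is then to control this elastic correction: its part in $\e P$ is negligible (small transition volume against bounded $L^2$-norm), while in the soft phase one must prove that each deflection of the interface into a soft inclusion, although it shortens $S_{u_\e}$ by at most $O(\e)$, forces an elastic expenditure of order $\e t^2$ inside that inclusion, so that deflecting never pays once $t$ exceeds a geometric threshold $t_0$ and $F_\e(u_\e,Q)\ge1-o(1)$ for $t\ge t_0$. Making this trade-off --- surface length saved versus elastic energy spent in the soft phase --- quantitative and uniform over all admissible $u_\e$ is the technical heart of the statement; everything else is either routine or imported from \cite{BLZ}.
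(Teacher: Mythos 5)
The main gap is \eqref{crescita2}. For the claim $g(t,e)=1$ for $t\ge t_0$ you only set up a slicing/coarea strategy and then declare that the decisive step --- showing that every deflection of an almost-flat interface into a soft inclusion saves at most $O(\e)$ of crack length while forcing an elastic expenditure of order $\e t^2$, \emph{uniformly over all admissible competitors} --- is ``the technical heart'' and leave it unproven. That step \emph{is} the statement: in the soft phase the energy only controls $\e\int|\nabla u_\e|^2$, so the coarea correction $\tfrac2t\int|\nabla u_\e|$ can be of order $\e^{-1/2}$ there and cannot be absorbed by Cauchy--Schwarz; this is precisely the bridging mechanism that makes $g(0^+,e)=1/\sqrt2<1$, so any proof of \eqref{crescita2} must quantify when bridging stops paying, and this is neither routine nor contained in what you import. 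The paper sidesteps the difficulty by a comparison argument: it introduces an auxiliary microstructure $\tilde P\subset P$ with \emph{more} soft squares (side $1/4$, centered at all points of $\tfrac12\Z^2$), for which (a straightforward modification of) \cite[Theorem 2]{BLZ} yields $\tilde g(t,e)=1$ for $t\ge t_0$; since $\tilde F_\e\le F_\e$ one gets $\tilde F\le F$ and, by locality, $\tilde g\le g\le 1$, hence \eqref{crescita2}. If you do not transfer the hard lower bound from \cite{BLZ} in some such way, you must actually carry out the trade-off estimate you describe, which is a substantial piece of analysis missing from the proposal.

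There is also a quantitative flaw in your argument for \eqref{crescita1}. The claimed per-inclusion cost $O(\e t^2)$ ignores the surface term of your own construction: if the transition layer inside a soft square has height $h$, its lateral sides (or the ``short'' crack extension you mention), where the interpolated profile jumps against the neighbouring constant values, contribute crack length of order $h$ per inclusion \emph{independently of the size of the jump}, while the $\e$-weighted bulk is of order $\e^2t^2/h$; optimizing gives $h\sim\e t$ and a per-inclusion cost of order $\e t$, i.e.\ a total extra cost \emph{linear} in $t$. This is exactly the paper's construction (affine ``bridging'' strips of height $\e t/\sqrt2$ at the top/bottom of the crossed inclusions), which yields directly $g(t,e)\le 1/\sqrt2+2\sqrt2\,t$ for $t\le(\sqrt2-1)/4$ and concludes with $g\le1$ otherwise (note $1/\sqrt2+2\sqrt2\,t=1$ exactly at $t=(\sqrt2-1)/4$). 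With a layer of height of order $\e$ independent of $t$ you pick up an extra crack of total length $O(1)$, which would ruin $g(0^+,e)=1/\sqrt2$; and even granting a bound $1/\sqrt2+Ct^2$, your ``elementary inequality'' $\min\{1/\sqrt2+Ct^2,\,1\}\le 1/\sqrt2+2\sqrt2\,t$ is false for large $C$ (it requires $C\le 8(2+\sqrt2)$), a constant you never estimate. The remaining parts (integral representation, (i), (ii)) follow essentially the paper's route, which simply invokes \cite[Theorem 1]{BLZ}.
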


\begin{figure}
\centering
\includegraphics[width=11cm]{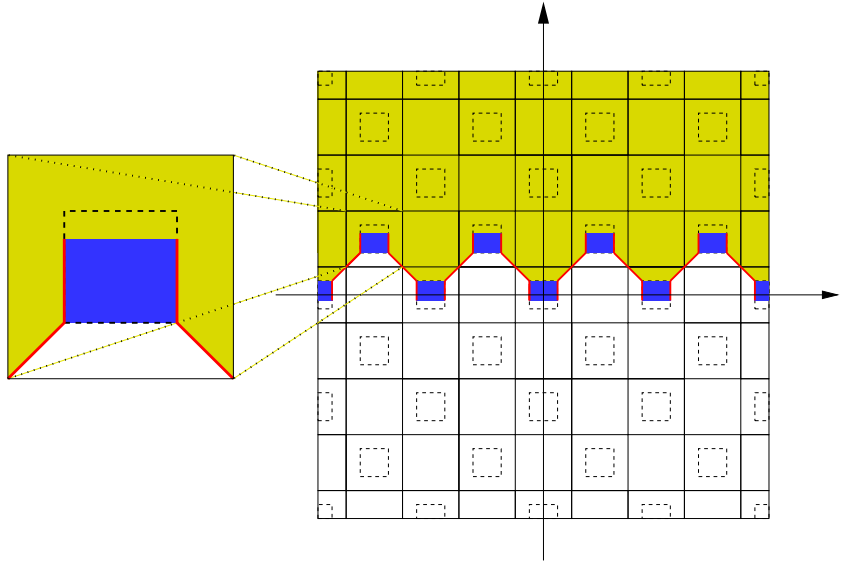}
\caption{In yellow the set $Z_k\setminus(R_k\cup R'_k)$ where $u_k$ takes value $t$, in blue the set $R_k\cup R'_k$ 
where $u_k$ is affine, and in red the discontinuity set $S_{u_k}$.}
\label{fig3}
\end{figure}

\begin{figure}
\centering
\includegraphics[width=14cm]{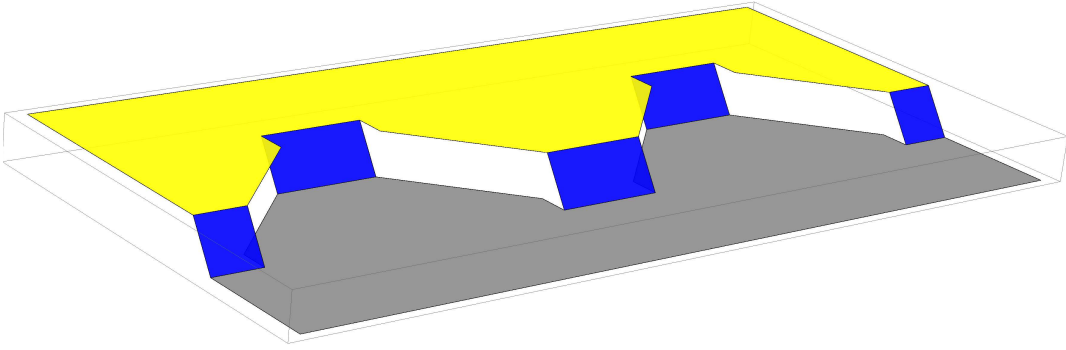}
\caption{the profile of the function $u_k$.}
\label{fig3-bis}
\end{figure}
\begin{proof}
The integral representation of the $\Gamma$-limit and points (i) and (ii) follow as a particular case of \cite[Theorem 1]{BLZ}. 
We divide the proof of (iii) into two steps: one for \eqref{crescita1} and one for~\eqref{crescita2}.

\medskip

\noindent \textbf{Estimate \eqref{crescita1}.}
Since $g(\cdot,e)\leq1$, it is enough to show that 
$g(t,e)\leq 1/\sqrt{2}+2\sqrt{2}\,t$ whenever $t\leq(\sqrt{2}-1)/4$. 
To this aim, consider the sets 
\begin{equation*}\begin{split}
R &:=(-\tfrac18,\tfrac18)\times(\tfrac18-\tfrac{t}{\sqrt{2}},\tfrac18)\\
\quad R'&:=(\tfrac38,\tfrac58)\times(\tfrac38,\tfrac38+\tfrac{t}{\sqrt{2}})
\end{split}\end{equation*}
and 
\begin{equation*}\begin{split}
R_\e&:=Q\cap \e\bigcup_{i\in\Z}\bigl(R+(i,0)\bigr)\\
R'_\e&:=Q\cap \e\bigcup_{i\in\Z}\bigl(R'+(i,0)\bigr).
\end{split}\end{equation*}
Then, with $Z_\e$ as in \eqref{zig-zag}, let $(u_k)\subset SBV^2(Q)$ be the sequence of ``bridging'' functions defined as 
\begin{equation*}
u_k(x):=
\begin{cases}
t & \textrm{if } x\in Z_{\e_k}\setminus(R_{\e_k}\cup R'_{\e_k}),\\[2pt]
t-\frac{\sqrt{2}}{8}+\frac{\sqrt{2}}{\e_k}\,x_2 & \textrm{if } x\in R_{\e_k},\\[2pt]
-\frac{3\sqrt{2}}{8}+\frac{\sqrt{2}}{\e_k}\,x_2 & \textrm{if } x\in R'_{\e_k},\\[2pt]
0 & \textrm{if } x\in Q\setminus(Z_{\e_k}\cup R_{\e_k} \cup R'_{\e_k}),
\end{cases}
\end{equation*}
(see Figures~\ref{fig3} and \ref{fig3-bis}). 
Note that with the choice of $t$ we have $R\subset Q_{1/4}$ and $R'\subset Q_{1/4}+(1/2,1/2)$,
and therefore $R_{\e}\cup R'_{\e}\subset Q\setminus\e P$.
We clearly have $u_k\rightarrow u_t$ in $L^1(Q)$; moreover
\begin{equation*}
\int_{R_{\e_k}}|\nabla u_k|^2\dx\leq(\lfloor\tfrac1{\e_k}\rfloor+1)\sqrt{2}\,t\quad 
\text{and}\quad \HH^1(S_{u_k})\leq\e_k(\lfloor\tfrac1{\e_k}\rfloor+1)\bigl(\tfrac1{\sqrt{2}}+\sqrt{2}\,t\bigr).
\end{equation*}
Thus we readily deduce
\begin{equation*}
g(t,e)=F(u_t,Q) \leq \limsup_{k\to +\infty} F_{\e_k}(u_k,Q)
\leq \tfrac1{\sqrt{2}}+2\sqrt{2}\,t,
\end{equation*}
and hence the estimate from above.

\medskip

\noindent \textbf{Equality \eqref{crescita2}.}
Let 
\begin{equation*}
\tilde P := \R^2 \setminus \tfrac12 \Bigl( \bigcup_{i\in\Z} \overline Q_{\frac12} + i \Bigr) 
\end{equation*}
and define $\tilde F_\e$ as $F_\e$ 
with $P$ replaced by $\tilde P$, cf.\ \eqref{F-eps}.
Moreover, let $\tilde F$ be the $\Gamma$-limit of $(\tilde F_{\e_k})$ and $\tilde g$ its surface energy density.
Then we can apply (a straightforward modification of) \cite[Theorem 2]{BLZ} to $\tilde F_\e$, obtaining that 
$\tilde g = 1$ for $t$ larger than a threshold $t_0$.
On the other hand, since $\tilde P\subset P$, we have $\tilde F_\e \le F_\e$, which implies $\tilde F \le F$ 
and by locality $\tilde g \le g$.
\end{proof}

\vspace{8pt}
In what follows we fix a decreasing sequence $(\e_k)$ of positive numbers such that $(F_{\e_k})$ $\Gamma$-converges to 
a functional $F$. Just in order to simplify the construction involved in our results, we assume that $(\e_k^{-1})$ is
is a sequence of odd integers. In this way for any fixed $k$ the unitary cell $Q$ is divided precisely in 
periodicity cells of side $\e_k$, one centered in the origin. 

\vspace{8pt}
We are mainly interested in the local minima of the $\Gamma$-limit $F$. Fixed $t>0$ and given $\delta>0$, denote by $w_t$ 
a solution to the problem
\begin{equation}\label{min prob}
\begin{cases}
&\min F(w,Q) \colon w\in SBV^2(Q),\\
& w= 0 \,\text{ in }  (-1/2,1/2)\times(-1/2,-\delta/2),\\
& w= t \,\,\text{ in }  (-1/2,1/2)\times(\delta/2,1/2).
\end{cases}
\end{equation}

\begin{lemma}
For any given $t>0$ and given $\delta>0$, there exists a solution $w_t$ to the minimum problem \eqref{min prob}
constant in the horizontal direction, i.e.,
\begin{equation}\label{constant}
w_t(x_1,x_2)=\hat{w}_t(x_2)
\end{equation}
for a certain $\hat{w}_t\in SBV^2((-1/2,1/2))$.
\end{lemma}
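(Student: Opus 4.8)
The plan is to solve first the one–dimensional version of \eqref{min prob} by the direct method and then to show, by a slicing argument, that any of its solutions solves the full problem. For $v\in SBV^2((-1/2,1/2))$ write $\tilde v(x_1,x_2):=v(x_2)$; since $\nabla\tilde v=(0,v')$ and $S_{\tilde v}=(-1/2,1/2)\times S_v$ carries normal $e$ and jump $[v]$,
\begin{equation*}
F(\tilde v,Q)=\int_{-1/2}^{1/2}f\big((0,v')\big)\dx+\sum_{s\in S_v}g\big([v](s),e\big)=:\mathcal J(v).
\end{equation*}
First I would produce a minimiser $\hat w_t$ of $\mathcal J$ among the $v$ satisfying the two constraints in \eqref{min prob}. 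The functional $\mathcal J$ is $L^1$–lower semicontinuous, being the composition of the lower semicontinuous (it is a $\Gamma$–limit) functional $F(\cdot,Q)$ with the isometry $v\mapsto\tilde v$ of $L^1((-1/2,1/2))$ into $L^1(Q)$; and by \eqref{homcoerc} one has $f((0,s))\ge c_1 s^2$ and $g(\cdot,e)\ge c_2>0$, so along a minimising sequence — which may be truncated into $[0,t]$ without increasing $\mathcal J$, as $g(\cdot,e)$ is nondecreasing — the quantities $\int|v_n'|^2$, $\#S_{v_n}$ and $\|v_n\|_\infty$ stay bounded; $SBV$–compactness then yields a subsequence converging in $L^1$ to an admissible $\hat w_t\in SBV^2((-1/2,1/2))$. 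Setting $w_t(x_1,x_2):=\hat w_t(x_2)$ gives a competitor for \eqref{min prob} of the form \eqref{constant}, so all that is left is to check its optimality.

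It remains to show $F(w_t,Q)=\mathcal J(\hat w_t)\le F(w,Q)$ for \emph{every} competitor $w$ of \eqref{min prob}; the reverse inequality for the infima is trivial, the functions constant in $x_1$ forming a subclass of the admissible ones. Fix such a $w$. By the slicing theory of $SBV$ functions, for a.e.\ $x_1$ the slice $w^{x_1}:=w(x_1,\cdot)$ lies in $SBV^2((-1/2,1/2))$, still meets the two constraints, and satisfies $(w^{x_1})'=\partial_2 w(x_1,\cdot)$ a.e., $S_{w^{x_1}}=\{x_2:(x_1,x_2)\in S_w\}$, $[w^{x_1}](x_2)=\pm[w](x_1,x_2)$ (the sign being immaterial for $\mathcal J$, since $g(\cdot,e)$ is even). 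Hence $\min\mathcal J\le\mathcal J(w^{x_1})$ for a.e.\ $x_1$, and integrating in $x_1$,
\begin{align*}
\min\mathcal J&\le\int_{-1/2}^{1/2}\mathcal J(w^{x_1})\dx_1\\
&=\int_{-1/2}^{1/2}\!\!\int_{-1/2}^{1/2}\!f\big((0,\partial_2w)\big)\dx_2\dx_1+\int_{-1/2}^{1/2}\ \sum_{x_2\in S_{w^{x_1}}}\!\!g\big([w](x_1,x_2),e\big)\dx_1.
\end{align*}
By Fubini the first term is at most $\int_Q f(\nabla w)\dx$ as soon as $f((0,\xi_2))\le f(\xi)$ for all $\xi$; by the coarea formula on the rectifiable set $S_w$ the second term equals $\int_{S_w}g([w],e)\,|\nu_w\cdot e|\,\mathrm{d}\HH^1$, which is at most $\int_{S_w}g([w],\nu_w)\,\mathrm{d}\HH^1$ as soon as $|\nu\cdot e|\,g(t,e)\le g(t,\nu)$ for every $t>0$ and $\nu\in\S^1$. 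Granting these two inequalities, $\min\mathcal J\le F(w,Q)$, which is what we want.

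Both inequalities rest on the reflection $x_1\mapsto-x_1$, under which the generating set $D$ — hence $P$, $F_\e$, $F$, $f$ and $g$ — is invariant, so that $f(-\xi_1,\xi_2)=f(\xi)$ and $g(t,(-\nu_1,\nu_2))=g(t,\nu)$. As $f$ is convex (being the bulk density of the lower semicontinuous functional $F$), $f((0,\xi_2))=f\big(\tfrac12(\xi_1,\xi_2)+\tfrac12(-\xi_1,\xi_2)\big)\le f(\xi)$, which settles the first. The inequality $|\nu\cdot e|\,g(t,e)\le g(t,\nu)$ is the delicate point and is where I expect the real effort. For $\nu\notin\{\pm e,\pm e_1\}$ set $|\nu\cdot e|=\cos\beta$ with $\beta\in(0,\pi/2)$, and approximate in $L^1(Q)$ the straight jump $t\chi_{\{x_2>0\}}$, whose energy is exactly $g(t,e)$, by piecewise–constant functions $u_j$ valued in $\{0,t\}$ whose jump set is a fine zig–zag contained in a shrinking neighbourhood of $\{x_2=0\}$, with up–going and down–going facets having normals $\nu$ and $(-\nu_1,\nu_2)$ and total length $(\cos\beta)^{-1}+o(1)$; by the $x_1$–symmetry each facet carries surface density $g(t,\nu)$, so $F(u_j,Q)\to(\cos\beta)^{-1}g(t,\nu)$, and the $L^1$–lower semicontinuity of $F$ gives $g(t,e)\le(\cos\beta)^{-1}g(t,\nu)$. (Equivalently, the positively $1$–homogeneous extension of $g(t,\cdot)$ is convex — necessary for the lower semicontinuity of the surface part of $F$ — and one concludes by a supporting–plane argument using its $x_1$–symmetry.) Keeping the zig–zag inside $Q$ and controlling the contribution of its corners are routine.
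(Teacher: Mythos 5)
Your argument is correct, but it follows a genuinely different route from the paper. The paper never reduces to a one-dimensional problem: it takes an arbitrary minimizer $w$ of \eqref{min prob}, selects among the $j$ vertical strips $S_j^i$ the one with least energy, reflects it across the vertical lines $x_1=-1/2+i/j$ to produce a $2/j$-periodic (in $x_1$) competitor $v_j$ with $F(v_j,Q)=F(w,Q)$ — the only structural input being the invariance of the densities under $\nu_1\mapsto-\nu_1$ (and the fact that reflections create no new jumps) — and then lets $j\to\infty$, using $SBV$ compactness and lower semicontinuity to obtain a minimizer independent of $x_1$. You instead solve the one-dimensional problem $\mathcal J$ directly and prove its optimality among all two-dimensional competitors by slicing, which forces you to establish two pointwise inequalities on the homogenized densities, $f((0,\xi_2))\le f(\xi)$ and $|\nu\cdot e|\,g(t,e)\le g(t,\nu)$; your proofs of these (convexity plus $x_1$-symmetry of $f$; zig-zag competitors plus $L^1$-lower semicontinuity of the $\Gamma$-limit, or BV-ellipticity) are sound, and they buy more than the lemma asks for — a quantitative comparison between oblique and horizontal jump costs and the identity $\min F=\min\mathcal J$ — at the price of more work; the paper's reflection/periodization trick is softer and avoids any property of $f$ and $g$ beyond symmetry. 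Two small points to tidy up if you keep your route: the evenness of $g(\cdot,e)$ and the case $\nu\cdot e<0$ in the zig-zag step require also the reflection symmetry in $x_2$ (or the point symmetry) of $P$, which does hold here and should be invoked explicitly; and the convexity of $f$ is most cleanly justified by noting that the homogenized bulk density is a nonnegative quadratic form (as the paper itself uses later), rather than by appealing to necessity of convexity for lower semicontinuity.
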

\begin{proof}
Since the energy decreases by truncation, in searching for solution to \eqref{min prob} we can always assume the additional
$L^\infty$-bound $w(x)\in[0,t]$. Then, the compactness in $SBV$ and the direct method of calculus of variations
provide the existence of a solution $w$ to \eqref{min prob}. For any $j\in\N$ and $i\in\{1,\ldots,j\}$ 
let $S_j^i$ be the open strip $(-1/2+(i-1)/j,-1/2+i/j)\times(-1/2,1/2)$. Moreover, let $i_j\in\{1,\ldots,j\}$ be 
a solution to the problem
\begin{equation*}
\min\bigl\{F(w,S_j^i) \colon i=1,\ldots,j\bigr\}.
\end{equation*}
We restrict $w$ to $S_j^{i_j}$, and then we extend it to $Q$ by reflection with respect to the axes $x_1=-1/2+i/j$, $i=1,\ldots,j-1$;
we denote by $v_j$ such an extension. 
Because the symmetry of the set $P$, $g(s,(\nu_1,\nu_2))=g(s,(-\nu_1,\nu_2))$ for any $s\in\R$ and $\nu\in\S^1$.
Therefore, we have $F(w,Q)=F(v_j,Q)$ and $v_j$ is still a solution to \eqref{min prob}.
Again by compactness in $SBV$, up to a subsequence $v_j$ converges to a certain $v$. By the lower semicontinuity
of the functional $F$, $v$ is still a solution to \eqref{min prob}. Moreover, since any $v_j$ is $2/j$-periodic in
the variable $x_1$, the function $v$ depends only on $x_2$ and it is the desired solution.
\end{proof}

In what follows we will work with solutions to \eqref{min prob} satisfying condition \eqref{constant}, because
they are easier to handle. Being $f$ a quadratic form, if $\hat{w}_t\in H^1((-1/2,1/2))$, then $\hat{w}_t$
has to be affine in $(-\delta/2,\delta/2)$. 
Noted that the function $u_t:=t\chi_{(-1/2,1/2)\times(0,1/2)}$ has energy $F(u_t,Q)=g(t,e)\leq1$,
we deduce that for $\delta$ small enough $\hat{w}_t$ presents a discontinuity, since the energy of an affine function
blows-up as $\delta$ goes to zero. 
Since $g$ varies between $1/\sqrt{2}$ and $1$, $\hat{w}_t$ cannot have more than one discontinuity point, otherwise
\begin{equation*}
F(w_t,Q)\geq\sqrt{2}>F(u_t,Q).
\end{equation*}
Finally, again by minimality, if $\hat{x}_2$ is the discontinuity point of $\hat{w}_t$, we have that 
$\hat{w}_t$ in affine in $(-\delta/2,\hat{x}_2)$ and $(\hat{x}_2,\delta/2)$. The slopes of the function
in this two intervals depend on $f$, $g$, $t$, and $\delta$. Note that, being $g=g(t,e)$ definitively equal to $1$ 
for $t$ large, beyond a certain threshold $t_0$ it is not energetically favorable not to be flat in $(-\delta/2,\hat{x}_2)$ 
and $(\hat{x}_2,\delta/2)$, since the increment of bulk energy is not compensated by the reduction of the surface energy. 
Therefore, $\hat{w}_t=t\chi_{(\hat{x}_2,1/2)}$ for $t$ larger than $t_0$.

Let us now fix a small quantity $\eta>0$ that we will use later. 
Since $g(t,e)\leq1/\sqrt{2}+2\sqrt{2}\,t$, there exists $t=t(\eta)$ such that
\begin{equation}\label{vicino}
F(u_t,Q)=g(t,e)< \tfrac{1}{\sqrt{2}}+\eta.
\end{equation}
For what we said before, we can also choose $\delta=\delta(t)$ so small that any solution $w_t$ 
to the problem \eqref{min prob}-\eqref{constant} has a horizontal discontinuity set $(-1/2,1/2)\times\{\hat{x}_2\}$. 
Note that the solution is not unique, since the point $\hat{x}_2$ can vary. 
Now that $t$ and $\delta$ are fixed as functions of $\eta$, let us also fix a solution $w=w_t$ of the problem \eqref{min prob},
and consider a recovery sequence $(w_k)$ for $w$ with respect to $F_{\e_k}$.
By definition, $F_{\e_k}(w_k,Q)\rightarrow F(w,Q)$ and $w_k\rightarrow w$ strongly in $L^1$
(and therefore weakly in $SBV$) as $k$ goes to infinity.

Our main result is to show where the discontinuity set of $w_k$ concentrates for $k$ going to infinity. 
Let us introduce some other sets in order the better explain the geometrical setting. 

\begin{figure}
\centering
\psfrag{1}{$\varrho$}
\includegraphics[width=13cm]{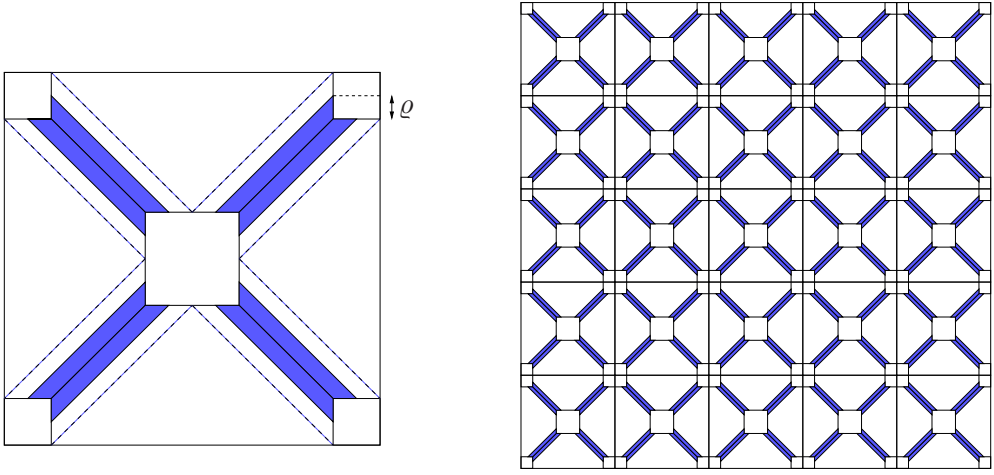}
\caption{In blue the set $T$ (on the left) and the set $T_{\e_k}$ (on the right).}
\label{fig4}
\end{figure}

We fix another small quantity $\varrho\in(0,1/8)$. We denote by $T^1$ the set given by the union of two isosceles 
trapezoids sharing the same short base constituted by the segment $R^1$ of endpoints $(1/8,1/8)$ and $(3/8,3/8)$. 
They have long base constituted respectively by the segment of endpoints $(1/8,1/8-\varrho)$ and $(3/8+\varrho,3/8)$, 
and the segment of endpoints $(1/8-\varrho,1/8)$ and $(3/8,3/8+\varrho)$.
We set $T^3:=-T^1$, while we denote by $T^2$ [respect. $T^4$] the reflection of $T^1$ with respect to the axis $\{x_1=0\}$
[respect. $\{x_2=0\}$]. Finally, we set $T:=\bigcup_{h=1}^4 T^h$ (see Figure \ref{fig4}) and 
\begin{equation}\label{farfalle}
T_\e:=\e\bigcup_{i\in\Z^2}(T+i).
\end{equation}

\begin{theorem}
Given $\eta>0$, choose $t>0$ small enough so that \eqref{vicino} is verified, and $\delta>0$ small enough so
that the solutions to the problem \eqref{min prob}-\eqref{constant} are not affine and discontinuous. Let $w$ be one of this 
solutions, with discontinuity set $(-1/2,1/2)\times\{\hat{x}_2\}$ for a certain $\hat{x}_2\in[-\delta/2,\delta/2]$, 
and $(w_k)$ one of its recovery sequence. Then, given $\varrho\in(0,1/7)$ and defined $T_\e$ as in \eqref{farfalle}, 
\begin{equation}\label{main estimate}
\liminf_{k\rightarrow+\infty}\HH^1(S_{w_k}\cap T_{\e_k})\geq\frac{1}{\sqrt{2}}-\frac{\eta}{4\varrho}.
\end{equation}
\end{theorem}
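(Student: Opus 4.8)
The plan is to track the surface energy of the recovery sequence $(w_k)$ and to show that, inside the ``butterfly'' sets $T_{\e_k}$, the crack of $w_k$ must already look (asymptotically) like the short zig-zag diagonals, because otherwise it would be too long for $w_k$ to be a recovery sequence. The starting point is the energy budget: by definition of recovery sequence and by \eqref{vicino}, $\limsup_k F_{\e_k}(w_k,Q) = F(w,Q) = F(u_t,Q) < 1/\sqrt 2 + \eta$, so in particular the total crack length satisfies $\limsup_k \HH^1(S_{w_k}) < 1/\sqrt 2 + \eta$. The idea is to estimate the crack length outside $T_{\e_k}$ from below by a quantity close to $1/\sqrt 2$, so that what remains available inside $T_{\e_k}$ is small only if it is also bounded below by roughly $1/\sqrt 2$; the discrepancy term $\eta/(4\varrho)$ will come from converting the $\eta$-surplus of bulk-plus-surface energy into crack length using the thin collars of width of order $\varrho$.

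First I would localise. Fix a strip $(-1/2,1/2)\times(\hat x_2 - \sigma,\hat x_2+\sigma)$ around the macroscopic crack; since $w_k \to w$ in $L^1$ and $w$ jumps by $t$ across $\{x_2=\hat x_2\}$, for $k$ large the function $w_k$ must pass from values near $0$ to values near $t$ within this strip. Next, restrict attention to the vertical columns of periodicity cells meeting this strip. In each such column, consider the ``channels'' between consecutive soft inclusions: because $w_k$ has finite (and uniformly bounded) bulk energy on $\e_k P$, on most horizontal slices the oscillation of $w_k$ across a channel of the hard phase $\e_k P$ is controlled, so the transition from $0$ to $t$ is forced to happen either through the soft inclusions (cheap in bulk, since the modulus there is $\e_k$) or across the crack set $S_{w_k}$. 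A slicing/coarea argument on horizontal lines then shows that in each periodicity cell the crack $S_{w_k}$ must ``cut'' the hard phase so as to separate the top inclusions from the bottom ones, which by the cell computation behind \eqref{cell formula} costs at least (close to) $\e_k/\sqrt 2$ per cell in the hard region outside the corner squares. Summing over the $\approx \e_k^{-1}$ cells in the column reconstructs a lower bound $\gtrsim 1/\sqrt 2$ for $\HH^1(S_{w_k})$ up to $o(1)$, and — crucially — this mandatory length can be taken to lie inside $T_{\e_k}$, because $T^1,\dots,T^4$ are exactly the neighbourhoods (of width $\varrho$) of the four diagonal segments along which the minimal cut runs.

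The quantitative step, and the one carrying the $\eta/(4\varrho)$ term, is to make the previous paragraph robust: a priori $S_{w_k}$ need not sit exactly on the diagonals, and part of the transition could be paid by bulk energy in the hard phase rather than by crack. Here I would argue by contradiction: if $\liminf_k \HH^1(S_{w_k}\cap T_{\e_k}) < 1/\sqrt 2 - \eta/(4\varrho)$, then in a positive fraction of the cells in the column the crack inside the butterfly is shorter than the diagonal length by a definite amount, hence in those cells the $0$-to-$t$ transition of $w_k$ must either (a) use a crack piece of length $\ge$ (something) lying outside $T_{\e_k}$, or (b) be realised by a gradient of $w_k$ of size $\gtrsim t/(\e_k \varrho)$ across a hard channel of width $\gtrsim \e_k\varrho$, contributing bulk energy $\gtrsim t^2/\varrho$ per unit length, i.e.\ $\gtrsim t^2$ after summing — and a careful bookkeeping (using that $t$ was chosen as a function of $\eta$ via \eqref{vicino}) shows either alternative forces $\liminf_k F_{\e_k}(w_k,Q) > F(w,Q)$, contradicting that $(w_k)$ is a recovery sequence. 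Tuning the constants so the ``lost'' crack length $\eta/(4\varrho)$ is exactly compensated by the available surplus $\eta$ spread over collars of width $\varrho$ (the factor $1/(4\varrho)$ matching the four trapezoids and the width $\varrho$) closes the estimate.

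\textbf{Main obstacle.} The hard part will be the lower bound via slicing in the presence of bulk energy in the hard phase: unlike the pure perforation case behind \eqref{cell formula}, the transition from $0$ to $t$ can be partly absorbed by $\nabla w_k$ in $\e_k P$, so one cannot simply apply a connectedness/BV-slicing argument to conclude that $S_{w_k}$ separates top from bottom in each cell. One must quantify the trade-off cell by cell — on each horizontal slice, the jump of $w_k$ across a hard channel is at most its oscillation, which is controlled only in an $L^2$-averaged sense — and then upgrade this averaged control to a statement about $\HH^1(S_{w_k}\cap T_{\e_k})$ in a robust, $\varrho$-quantitative way. Getting the constant $\eta/(4\varrho)$ (rather than something weaker) out of this bookkeeping is the delicate point.
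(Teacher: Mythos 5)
The central gap is the mechanism that forces the crack to cut the hard phase in the first place. The per-cell ``separation of top from bottom inclusions'' that you want to extract from horizontal slicing is exactly the point you flag as your main obstacle, and the repair you propose --- a bookkeeping in which the alternative to a cut is a bulk contribution of order $t^2/\varrho$ per unit length, hence $\gtrsim t^2$ in total, supposedly exceeding the budget --- cannot work: by \eqref{crescita1} the choice of $t$ in \eqref{vicino} only requires $t\lesssim\eta$, so an extra bulk cost of order $t^2$ is negligible compared with the available surplus $\eta$, and no contradiction with the recovery-sequence property follows from it. The paper avoids any quantitative bulk-versus-crack trade-off at scale $t$. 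It slices along curved fibers $l^m_{\e_k}$ that run from the bottom to the top of $Q$, lie entirely in the hard phase $\e_k P$, and thread the narrow diagonal necks between neighbouring soft squares. If a set of fibers of measure at least $\lambda>0$ were not cut by $S_{w_k}$, an averaging over the fiber bundle (the bulk energy in $\e_k P$ is uniformly bounded) selects for each $k$ an uncut fiber along which $w_k$ is a one-dimensional Sobolev function with $L^2$-bounded derivative; since $w_k\to w$ pointwise a.e.\ and $w$ restricted to any such fiber is discontinuous, this is impossible. This is a qualitative compactness argument (bounded one-dimensional Sobolev norm versus pointwise convergence to a jump), not an energy comparison, and its conclusion is only that asymptotically almost every fiber is cut \emph{somewhere along its whole length} --- not a cell-by-cell separation, which indeed need not hold and which you cannot simply ``take to lie inside $T_{\e_k}$''.

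The second gap is the constant $\eta/(4\varrho)$, which you hope to obtain by tuning, with the heuristic ``four trapezoids of width $\varrho$''. In the paper it has a different and precise origin: the fibers that are cut are split into those cut inside $T_{\e_k}$ (a set $M^1_{\e_k}$ of parameters) and those cut only outside ($M^2_{\e_k}$); projecting the crack along the fiber bundle onto its narrowest cross-sections gives $\HH^1(S_{w_k}\cap T_{\e_k})\geq\tfrac{2\sqrt2}{3}\HH^1(M^1_{\e_k})$, while outside $T_{\e_k}$ the minimal cross-section of the bundle is wider by the factor $1+4\varrho$, so $\HH^1(S_{w_k}\setminus T_{\e_k})\geq(1+4\varrho)\tfrac{2\sqrt2}{3}\HH^1(M^2_{\e_k})$. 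Since $\HH^1(M^1_{\e_k})+\HH^1(M^2_{\e_k})\to 3/4$ and the total crack length is below $1/\sqrt2+\eta$ (note that the budget only uses $F(w,Q)\leq F(u_t,Q)$, not the equality you assert), the $4\varrho$ excess forces $\liminf_k\HH^1(M^1_{\e_k})\geq 3/4-\tfrac{3\eta}{8\sqrt2\varrho}$, and projecting back inside $T_{\e_k}$ yields exactly \eqref{main estimate}. Without a fiber bundle with these two cross-sections (or an equivalent coarea device), your horizontal-slice bookkeeping produces neither the localisation in $T_{\e_k}$ nor the stated constant, so the proposal as written does not close.
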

\begin{proof}
\emph{The basic idea is that for $t$ small the behaviors of $F_{\e_k}$ and  $\hat{F}_{\e_k}$ are similar}.
For the $\Gamma$-limit $\hat{F}$, the cell formula \eqref{cell formula} suggests that the functions of 
a recovery sequence for $w$ should have the discontinuity set concentrate in $T_\e$. Indeed, this is best way to 
cut the hard region $\e P$ (since it is thinner in the diagonals between two close soft inclusions).
In order to simplify the description of the proof, we assume $\hat{x}_2=0$.

First of all, let us define for each $m\in M:=(-1/8,-1/2)\cup(1/8,1/2)$ the fiber 
$l^m$ passing through $m$. If $m\in(1/8,1/2)$, consider the points (see Figure \ref{fig5})
\begin{equation*}\begin{split}
&p_1:=(m,0), \; p_2:=(m,m-1/8), \; p_3:=(1/24+2m/3,1/24+2m/3),\\ 
&p_4:=(m-1/8,m), \; p_5:=(m-1/8,1/2).
\end{split}\end{equation*}
The point $p_3$ belongs to the segment $R^1$ of endpoints $(1/8,1/8)$ and $(3/8,3/8)$, while $p_2$ belongs to the segment 
$S^1$ of endpoints $(0,1/8)$ and $(1/2,3/8)$. The middle point of $R^1$ is $p_7:=(1/4,1/4)$, while the middle point
of $S^1$ is $p_6:=(5/16,3/16)$. The ratio between the distance of $p_3$ from $p_7$ and the distance of $p_2$
from $p_6$ is $2/3$, i.e., the same ration between the lengths of $R^1$ and $S^1$.

We define $\tilde{l}^m$ as the union of the segments of endpoints $(p_1,p_2)$, $(p_2,p_3)$,..., $(p_4,p_5)$, and then
\begin{equation*}
l^m:=\tilde{l}^m\cup\{(x_1,x_2) \colon (x_1,-x_2)\in\tilde{l}^m\}.
\end{equation*}
If $m\in(-1/8,-1/2)$, we define $l^m$ via reflection:
\begin{equation*}
l^m:=\{(x_1,x_2) \colon (-x_1,x_2)\in l^{-m}\}.
\end{equation*}
Note that $Q\setminus D\overset{a.e.}{=}\bigcup\{l^m \colon m\in M\}$ and that the bundle of fibers
undergo a sort of compression of ratio $2/3$ in passing from $S^1\cup S^2$ to $R^1\cup R^2$,
where $R^2$ [respect. $S^2$] is the reflection of $R^1$ [respect. $S^1$] with respect to
the axis $\{x_1=0\}$. We also set $R^3:=-R^1$, while we denote by $R^4$ the reflection of $R^3$ 
with respect to the axis $\{x_2=0\}$, and by $R:=\bigcup_{h=1}^4 R^h$ the union. 

Finally, let us define the periodic and rescaled versions of the sets above:
\begin{equation}\label{nervatura}
M_\e:=\e\bigcup_{i\in\Z}(M+i), \quad R_\e:=\e\bigcup_{i\in\Z^2}(R+i),
\end{equation}
and for $m\in M_\e$
\begin{equation*}
l^m_\e:=\e\bigcup_{i\in\Z}(l^{\frac{m}{\e}-[\frac{m}{\e}]}+i).
\end{equation*}
Note that $\e P\overset{a.e.}{=}\bigcup\{l^m_\e \colon m\in M_\e\}$. In the next two steps we will
show that $S_{w_k}$ intersects asymptotically any fiber $l^m_{\e_k}$, $m\in M_{\e_k}\cap(-1/2,1/2)$,
and that such an intersection takes place mainly close to $Q\cap R_{\e_k}$, in the region $Q\cap T_{\e_k}$.

\begin{figure}
\centering
\psfrag{1}{$p_1$}
\psfrag{2}{$p_2$}
\psfrag{3}{$p_3$}
\psfrag{4}{$p_4$}
\psfrag{5}{$p_5$}
\psfrag{6}{$p_6$}
\psfrag{7}{$p_7$}
\includegraphics[width=7cm]{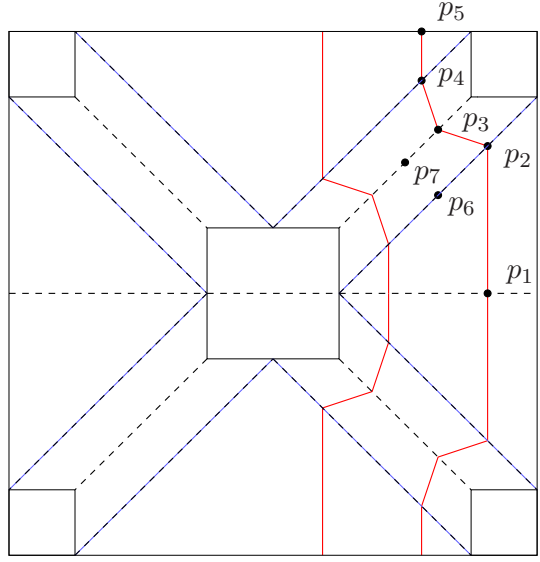}
\caption{In red a couple of fibers $l^m$.}
\label{fig5}
\end{figure}

\medskip

\noindent \textbf{Step 1.}
Let us define
\begin{equation*}
\tilde{M}_{\e_k}:=\bigl\{m\in M_{\e_k}\cap(-1/2,1/2) \colon l^m_{\e_k}\cap S_{w_k}=\emptyset\bigr\},
\end{equation*}
i.e., the set of the points $m$ whose fibers $l^m_{\e_k}$ do not intersect $S_{w_k}$. We will show in this step that
$\tilde{M}_{\e_k}$ tends to vanish:
\begin{equation}\label{taglio completo}
\lim_{k\rightarrow\infty} \HH^1(\tilde{M}_{\e_k})=0.
\end{equation}
\emph{The key in this step is that along these fibers $w_k$ is regular, and therefore it cannot converge to $w$, 
since it is not regular}.
We prove \eqref{taglio completo} by contradiction assuming that there exists $\lambda>0$ and a subsequence 
not relabeled such that
\begin{equation}\label{bound1 step1}
\HH^1(\tilde{M}_{\e_k})\geq\lambda \;\text{ for any } k\in\N.
\end{equation}

First of all, we straighten the fibers.
Fixed $m\in M_{\e}$,  we define $\psi_{\e,m}$ as the unique isometry that transform $l^m_\e$ in the segment 
$\{m\}\times(-|l^m_\e|/2,|l^m_\e|/2)$ keeping the direction.
Then, we define $\varphi_{\e,m}:(-1/2,1/2)\rightarrow l^m_\e$ by setting 
$\varphi_{\e,m}(s):=\psi^{-1}_{\e,m}((m,|l^m_\e|s))$.
We also assume that $w_k$ coincides with its precise 
representative defined as in \cite[Remark 3.79 and Corollary 3.80]{Amb00}.
Then,  by \cite[Theorems 3.28, 3.107 and 3.108]{Amb00}, for a.e. $m\in\tilde{M}_{\e_k}$ the composition 
$\omega_{k,m}:=w_k\circ\varphi_{\e_k,m}$ is a Sobolev map and its derivative is given by
$\omega'_{k,m}=\partial_1 w_k(\varphi_{\e_k,m})'_1+\partial_2 w_k(\varphi_{\e_k,m})'_2$.
In particular, since $|\varphi_{\e_k,m}'|\leq|l^m_\e|\leq2$, we have
\begin{equation*}
\int_{\tilde{M}_{\e_k}}\int_{-1/2}^{1/2}|\omega_{k,m}'|^2\,\mathrm{d}s\,\mathrm{d}m\leq4\int_Q|\nabla w_k|^2\dx.
\end{equation*}
Up to a subsequence, there exists a set $W\subset Q$ of null measure such that $w_k\rightarrow w$ pointwise in $Q\setminus W$.
Fixed a $\lambda'>0$, we select for each $k\in\N$ a $m_k\in\tilde{M}_{\e_k}$ so that
$\omega_{k,m_k}$ is a Sobolev map, $\HH^1(l^{m_k}_{\e_k}\cap W)=0$ and 
\begin{equation}\label{bound2 step1}
\int_{-1/2}^{1/2}|\omega_{k,m_k}'|^2\,\mathrm{d}s-\lambda'\leq\inf_{m\in\tilde{M}_{\e_k}}\int_{-1/2}^{1/2}|\omega_{k,m}'|^2\,\mathrm{d}s.
\end{equation}
By \eqref{bound1 step1}-\eqref{bound2 step1} the sequence $(\omega_{k,m_k}')$ is bounded in $L^2((-1/2,1/2),\R^2)$.
On the other hand, since $w_k\rightarrow w$ pointwise in $Q\setminus W$, $(\omega_{k,m_k})$ is also converging
pointwise a.e. to a function with discontinuity set $\{0\}$ and this is a contradiction. Therefore
\eqref{taglio completo} has to hold true.

\medskip

\noindent \textbf{Step 2.}
As we already said, in order to cut the bundle of fibers, the best choice is to make the cut in $T_\e$, and more precisely
along the set $R_\e$ as defined in \eqref{nervatura}. Indeed, here the hard region $\e P$ is thin just $1/\sqrt{2}$.
On the other hand, outside $T_\e$ the best choice is to make the cut along the diagonal part of the boundary of $T_\e$ itself.
Indeed, here the hard region $\e P$ is thin $(1+4\varrho)/\sqrt{2}$ (that it is smaller than $3/4$, since $\varrho<1/7$).
\emph{The key in this step is the fact that the ratio of the costs between the optimal cuts outside and inside $T_\e$ is $1+4\varrho$}.

Let us define 
\begin{equation*}
M^1_{\e_k}:=\bigl\{m\in M_{\e_k}\cap(-1/2,1/2) \colon l^m_{\e_k}\cap S_{w_k}\cap T_{\e_k}\neq\emptyset\bigr\},
\end{equation*}
i.e., the set of the points $m$ whose fibers $l^m_{\e_k}$ intersect $S_{w_k}$ in $T_{\e_k}$, and 
$M^2_{\e_k}:=(M_{\e_k}\cap(-1/2,1/2))\setminus(M^1_{\e_k}\cup\tilde{M}_{\e_k})$.
Note that by the previous step
\begin{equation}\label{taglio completo2}
\lim_{k\rightarrow\infty} \HH^1(M^1_{\e_k})+\HH^1(M^2_{\e_k})=\frac{3}{4}.
\end{equation}
The bundle of the fibers $\{l^m_\e \colon m\in M_\e\cap(-1/2,1/2)\}$ has cross section $3/4$ in $M_\e\times\{0\}$,
$1/\sqrt{2}$ in $\e\bigcup_{i\in\Z}((R^1\cup R^2)+i)$, and $(1+4\varrho)/\sqrt{2}$ in 
$\e\bigcup_{i\in\Z}((\hat{R}^1\cup \hat{R}^2)+i)$, where $\hat{R}^1$ is the segment of endpoints $(1/8,1/8-\varrho)$ 
and $(3/8+\varrho,3/8)$, and $\hat{R}^2$ is the reflection of $\hat{R}^1$ with respect to the axis $\{x_1=0\}$.
Therefore, if we first project $S_{w_k}\cap T_{\e_k}$ along the fibers on $\e_k\bigcup_{i\in\Z}((R^1\cup R^2)+i)$, 
and then back to $M_{\e_k}\times\{0\}$, we get the estimate
\begin{equation}\label{dentro}
\HH^1(S_{w_k}\cap T_{\e_k})\geq\frac{2\sqrt{2}}{3}\HH^1(M^1_{\e_k}),
\end{equation}
while if we first project $S_{w_k}\setminus T_{\e_k}$ along the fibers on $\e_k\bigcup_{i\in\Z}((\hat{R}^1\cup \hat{R}^2)+i)$, 
and then back to $M_{\e_k}\times\{0\}$, we get the estimate
\begin{equation}\label{fuori}
\HH^1(S_{w_k}\setminus T_{\e_k})\geq(1+4\varrho)\frac{2\sqrt{2}}{3}\HH^1(M^2_{\e_k}).
\end{equation}

We now prove that 
\begin{equation}\label{main estimate plus}
\liminf_{k\rightarrow+\infty}\HH^1(M^1_{\e_k})\geq\frac{3}{4}-\frac{3\eta}{8\sqrt{2}\varrho}.
\end{equation}
This, together with \eqref{dentro} will provide \eqref{main estimate}. We proceed by contradiction 
assuming that \eqref{main estimate plus} is false. Thanks to \eqref{taglio completo2}, this is equivalent to assume 
\begin{equation*}\begin{split}
\liminf_{k\rightarrow+\infty}\HH^1(M^2_{\e_k})
\geq\frac{3\eta}{8\sqrt{2}\varrho}.
\end{split}\end{equation*}
Then, by using \eqref{dentro}-\eqref{fuori} and again \eqref{taglio completo2},
\begin{equation*}\begin{split}
F&(w,Q)\geq\liminf_{k\rightarrow+\infty}\HH^1(S_{w_k})\\
&=\liminf_{k\rightarrow+\infty}\HH^1(S_{w_k}\cap T_{\e_k})+\HH^1(S_{w_k}\setminus T_{\e_k})\\
&\geq\frac{2\sqrt{2}}{3}\liminf_{k\rightarrow+\infty}\HH^1(M^1_{\e_k})
+(1+4\varrho)\frac{2\sqrt{2}}{3}\liminf_{k\rightarrow+\infty}\HH^1(M^2_{\e_k})\\
&=\frac{1}{\sqrt{2}}+4\varrho\frac{2\sqrt{2}}{3}\liminf_{k\rightarrow+\infty}\HH^1(M^2_{\e_k})
\geq\frac{1}{\sqrt{2}}+\eta.
\end{split}\end{equation*}
On the other hand, by \eqref{vicino} we have, since $w$ is a solution to \eqref{min prob}, 
$F(w,Q)<1/\sqrt{2}+\eta$, thus a contradiction.
\end{proof}

\begin{figure}
\centering
\psfrag{1}{$p$}
\psfrag{2}{$3\varrho$}
\psfrag{3}{$x_2=x_1+\varrho$}
\includegraphics[width=7.5cm]{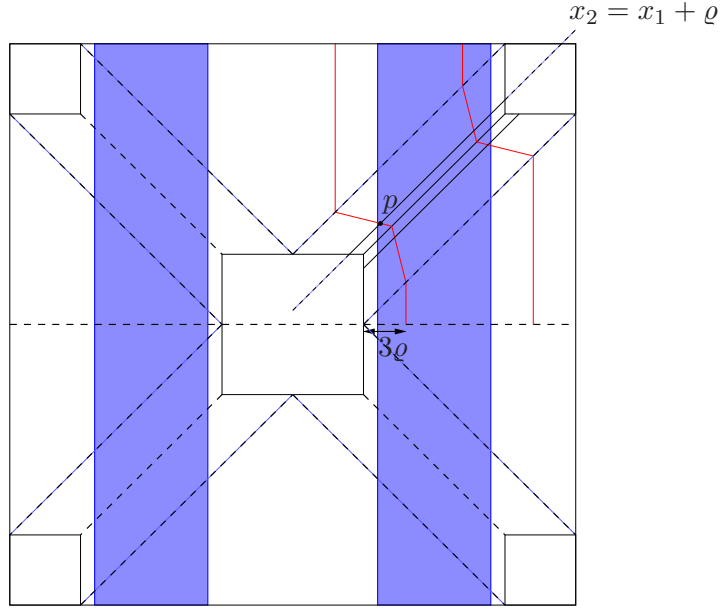}
\caption{In blue the set $U$.}
\label{fig6}
\end{figure}

\begin{remark}
Note that while \eqref{main estimate} just says that the discontinuity set is mainly localized in $T_{\e_k}$,
\eqref{main estimate plus} is stronger and it says that the discontinuity set is spread so to cut the fibers.

In particular, consider the set $U:=((-1/2+\varrho,-1/8-\varrho)\cup(1/8+\varrho,1/2-\varrho))\times\R$ and 
\begin{equation*}
U_\e:=\e\bigcup_{i\in\Z}(U+(i,0)).
\end{equation*}
The fiber $l^m$ intersects the straight line $\{x_2=x_1+\varrho\}$ (see Figure \ref{fig6}) at the point 
\begin{equation*}
p:=(1/24+2m/3+8m\varrho/3-4\varrho/3,1/24+2m/3+8m\varrho/3-\varrho/3).
\end{equation*}
Therefore, if $1/8+3\varrho\leq|m|\leq1/2-3\varrho$, then $l^m\cap T\subset U$.
Let us define 
\begin{equation*}
\hat{M}^1_{\e_k}:=\bigl\{m\in M^1_{\e_k} \colon 1/8+3\varrho\leq\tfrac{m}{\e_k}-[\tfrac{m}{\e_k}]\leq1/2-3\varrho\bigr\}.
\end{equation*}
The set $\hat{M}^1_{\e_k}$ is constituted by points $m$ whose fiber $l^m_{\e_k}$ intersect $S_{w_k}$ in $T_{\e_k}\cap U_{\e_k}$,
i.e., $l^m_{\e_k}\cap S_{w_k}\cap T_{\e_k}\cap U_{\e_k}\neq\emptyset$. Arguing as for \eqref{dentro} we get 
\begin{equation*}
\HH^1(S_{w_k}\cap T_{\e_k}\cap U_{\e_k})\geq\frac{2\sqrt{2}}{3}\HH^1(\hat{M}^1_{\e_k}).
\end{equation*}
By \eqref{main estimate plus} we have also
\begin{equation*}
\liminf_{k\rightarrow+\infty}\HH^1(\hat{M}^1_{\e_k})\geq\frac{3}{4}-\frac{3\eta}{8\sqrt{2}\varrho}-6\varrho
\end{equation*}
and then 
\begin{equation}\label{main estimate colonne}
\liminf_{k\rightarrow+\infty}\HH^1(S_{w_k}\cap T_{\e_k}\cap U_{\e_k})\geq\frac{1}{\sqrt{2}}-\frac{\eta}{4\varrho}-4\sqrt{2}\varrho.
\end{equation}
\end{remark}

\bigskip

As we observed before, when $t$ is larger than a threshold $t_0$ (depending on $f$, $g$ and $\delta$), the solutions
to the problem \eqref{min prob}-\eqref{constant} have the form $w=t\chi_{(-1/2,1/2)\times(\hat{x}_2,1/2)}$ for some 
$\hat{x}_2\in(1/2,1/2)$. Let us give for these solutions a complementary estimate to \eqref{main estimate colonne}, 
in the sets $Q\setminus U_{\e_k}$ and when $t$ is large. The proof is based on \cite[Theorem 2]{BLZ}.
The point is that for $t$ large at the microscopic level it is energetically convenient to break also
the soft inclusions, instead to stretch them.

\begin{theorem}
let $w:=t\chi_{(-1/2,1/2)\times(\hat{x}_2,1/2)}$ and $(w_k)$ be a sequence in $SBV^2(Q)$ converging to $v$ in $L^1(Q)$. 
Given $\varrho>0$, there exists a threshold $t_0=t_0(\varrho)$ such that, if $t\geq t_0$, then 
\begin{equation}\label{second main estimate}
\liminf_{k\rightarrow+\infty}F_{\e_k}(w_k,Q\setminus U_{\e_k})\geq\frac{1}{2}+4\varrho.
\end{equation}
\end{theorem}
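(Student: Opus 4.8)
The statement is the large--opening counterpart of the previous theorem, and, as announced, the engine is (an adaptation of) \cite[Theorem 2]{BLZ}: for $t$ above a threshold it is never energetically convenient to transmit a large jump through a soft inclusion by stretching it, because the bulk cost of a continuous transition of height of order $t$ across an $\e_k$--inclusion is bounded below, uniformly in $k$, by a fixed multiple of $t^2$. I would argue by contradiction, assuming $\liminf_k F_{\e_k}(w_k,Q\setminus U_{\e_k})<\tfrac12+4\varrho$; passing to a subsequence, $F_{\e_k}(w_k,Q\setminus U_{\e_k})<\tfrac12+4\varrho$ for every $k$, so $\Lambda:=\sup_k F_{\e_k}(w_k,Q\setminus U_{\e_k})$ is finite and bounded by a quantity depending only on $\varrho$, and $\HH^1(S_{w_k}\cap(Q\setminus U_{\e_k}))$ together with the two bulk contributions on $Q\setminus U_{\e_k}$ are bounded. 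Recall that $w$ jumps by $t$ across $\{x_2=\hat x_2\}$ and that $Q\setminus U_{\e_k}$ is the union of the vertical $\e_k$--strips centred on the columns of soft inclusions --- the wide ones, of width $(\tfrac14+2\varrho)\e_k$ around $x_1\in\e_k\Z$, and the thin ones, of width $2\varrho\,\e_k$ around $x_1\in\e_k(\tfrac12+\Z)$ --- so that $(Q\setminus U_{\e_k})^{x_1}$ has length $\tfrac14+4\varrho$.

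Next I would slice $Q\setminus U_{\e_k}$ by vertical lines. For a.e.\ $x_1$ the slice $w_k(x_1,\cdot)\in SBV^2(-\tfrac12,\tfrac12)$, and, along a further subsequence (Fubini), $w_k(x_1,\cdot)\to w(x_1,\cdot)$ in $L^1$, a function jumping by $t$ at $x_2=\hat x_2$. Writing the variation $t$ of the slice as $\tau_J(x_1)+\tau_C(x_1)$, the parts carried respectively by jumps and continuously, and using, exactly as in \cite[Theorem 2]{BLZ}, that a continuous transition of height $\tau$ across a hard piece of the slice costs at least $\tau^2$ over the length of that piece, while across a soft square it still costs at least $c\,\tau^2$ (the weight $\e_k$ being absorbed by the $\e_k^{-1}$ growth of the gradient), one gets, after integrating in $x_1$ over $(Q\setminus U_{\e_k})^{x_1}$, that $\Lambda\ge F_{\e_k}(w_k,Q\setminus U_{\e_k})\ge c\int\tau_C^2\,\mathrm{d}x_1+\int\#\bigl(S_{w_k}(x_1,\cdot)\bigr)\,\mathrm{d}x_1$. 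Since $\Lambda\le\tfrac12+4\varrho$, there is $t_0=t_0(\varrho)$ such that for $t\ge t_0$ the first term on the right would exceed $\Lambda$ unless $\tau_J(x_1)$ is close to $t$ on almost every such slice; in particular a.e.\ slice through $Q\setminus U_{\e_k}$ then carries a jump.

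The naive projection $\HH^1(S_{w_k}\cap(Q\setminus U_{\e_k}))\ge\int\#(S_{w_k}(x_1,\cdot))\,\mathrm{d}x_1$ only gives $\tfrac14+4\varrho$; the improvement to $\tfrac12+4\varrho$ has to be squeezed out of the columns. Along an $\e_k$--column carrying a soft inclusion that straddles $\{x_2=\hat x_2\}$, for $t\ge t_0$ the jump cannot be confined to that inclusion alone, and --- since the pieces of $S_{w_k}$ sitting in the adjacent hard ribs reach such an inclusion only near its ``upper'' corners, which is exactly the concentration already in force (the same mechanism that produces the zig--zag and \eqref{main estimate colonne}) --- the cheapest crack inside that cell and inside $Q\setminus U_{\e_k}$ is the one running down the two lateral sides of the inclusion and then across it, of total length $\e_k/2$ instead of $\e_k/4$. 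Counting one such inclusion per period cell produces the term $\tfrac12$; adding the contribution of the thin strips around the other family of inclusions together with the residual fibre contribution near the columns --- obtained exactly as the $4\varrho$ in \eqref{main estimate colonne} --- gives $\HH^1(S_{w_k}\cap(Q\setminus U_{\e_k}))\ge\tfrac12+4\varrho-o(1)$, which for $k$ large contradicts $F_{\e_k}(w_k,Q\setminus U_{\e_k})<\tfrac12+4\varrho$ and proves \eqref{second main estimate}.

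The main obstacle, as in \cite{BLZ}, is the second step made quantitative at the \emph{periodic} scale and uniform in $k$: the cheap stretching must be excluded not cell by cell but globally, keeping simultaneous control of the soft and the hard gradient contributions on each slice and of the way they can trade with jumps. This is what the adaptation of \cite[Theorem 2]{BLZ} to our microgeometry supplies, and I would invoke it once the reduction above is in place, inheriting the threshold $t_0=t_0(\varrho)$. The secondary (but genuine) difficulty is to carry the interface bookkeeping of the third step carefully enough to land on the constant $\tfrac12$ rather than $\tfrac14$; there the way the crack abuts the soft inclusions at their corners, recorded in the previous remark, is precisely what is used.
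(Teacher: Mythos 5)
Your route is genuinely different from the paper's, and the difference matters. The paper does not slice: it covers the columns containing the soft squares by four families of vertical strips of width $\e_k(\tfrac18+\varrho)$ (the two halves of the columns through the central inclusions and the two halves of the columns through the corner inclusions), selects in each family an almost optimal strip by minimizing $F_{\e_k}(w_k,\cdot)+\lambda_k\int|w_k-w|\dx$ (the penalization, with $\lambda_k\to\infty$ and $\lambda_k\int_Q|w_k-w|\dx\le\lambda$, is what keeps the modified sequence converging to $w$), reflects that strip about its edge and extends it $\e_k(\tfrac14+2\varrho)$-periodically. The resulting $\tilde w_k$ is a competitor for the auxiliary functional $\tilde F_{\e_k}$ whose matrix $\tilde P$ is \emph{connected}, so the adapted \cite[Theorem 2]{BLZ} applies as a $\Gamma$-liminf inequality on the fixed strip $\Om$ and gives \eqref{teor3 stima2}, which combined with the averaging inequality \eqref{teor3 stima1} yields $\liminf_k F_{\e_k}(w_k,S_k)\ge\tfrac18+\varrho$ for each of the four families; summing gives $\tfrac12+4\varrho$. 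Thus both the threshold $t_0(\varrho)$ and the constant come from a homogenization lower bound for an auxiliary periodic problem, not from per-slice or per-cell path comparisons; your slicing estimate ($\e\,$-weighted transitions through a soft square cost at least $c\tau^2$ per slice, hence a.e.\ slice must jump) reproduces only the ``density one'' part of this, i.e.\ a bound by the width of the region you slice.

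The genuine gap is precisely the step you flag yourself: the upgrade from $\tfrac14+4\varrho$ to $\tfrac12+4\varrho$. You obtain it by asserting that the crack reaches each straddling inclusion ``near its upper corners'', invoking the concentration of $S_{w_k}$ in $T_{\e_k}$, i.e.\ the zig-zag mechanism of the previous theorem and \eqref{main estimate colonne}. But that information is only available for recovery sequences of the \emph{small-opening} minimizer, and it is transferred to the large-opening sequence in the Main Theorem only through the irreversibility constraint $S_{v_k}\supset S_{w_k}$, which is not a hypothesis here: the present statement assumes nothing beyond $L^1$-convergence, so a sequence whose jump set is a straight horizontal segment is admissible, its cheapest localized cost on a central column is the column width $\e_k(\tfrac14+2\varrho)$ per cell and not $\e_k/2$, and no per-cell ``cheapest path'' comparison can force the lateral detours you describe. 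Moreover the bookkeeping cannot close: the thin strips around the other family of inclusions have total width $2\varrho$ and cannot supply the missing $\tfrac14$; in the paper the constant $\tfrac12+4\varrho$ is exactly the total width of the four half-column families through \emph{both} chessboard families of inclusions, so the region in \eqref{second main estimate} must be understood as that union of columns (note that $Q\setminus U_{\e_k}$, with $U$ literally as in the Remark, has width only $\tfrac14+4\varrho$ per period, on which $\tfrac14+4\varrho$ is sharp), and any correct proof has to treat the corner-inclusion columns on the same footing as the central ones, as the strip/reflection argument does. As written, your final step is not a proof but a heuristic that silently imports the irreversibility-driven geometry, so the estimate \eqref{second main estimate} is not established by your argument.
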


\begin{figure}
\centering
\includegraphics[width=11cm]{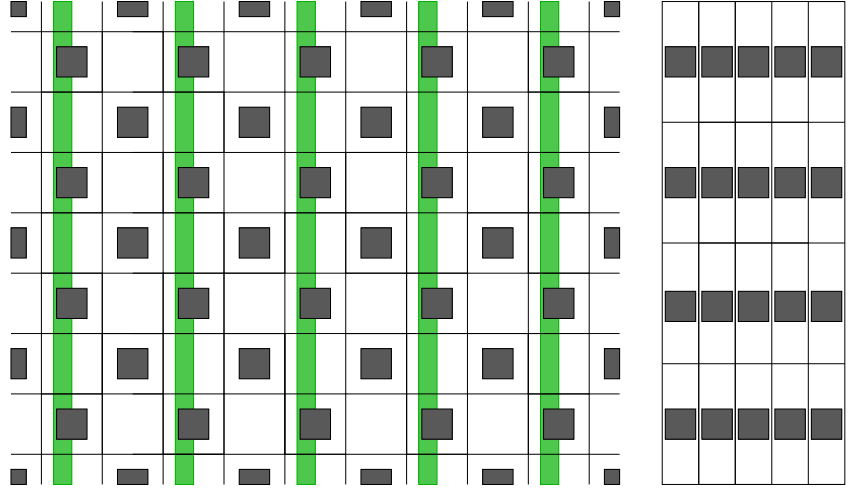}
\caption{In green the strips $S_k^i$. In gray and on the right the set $\R^2\setminus\tilde{P}$.}
\label{fig7}
\end{figure}

\begin{proof}
Let $S_k^i$ be the open strip $[(-\e_k(1/8+\varrho),0)\times(-1/2,1/2)]+(\e_k i,0)$, and $S_k$ the union
of the strips $S_k^i$ included in $Q$ (see Figure \ref{fig7}). Moreover, let $\lambda>0$ be a small quantity and 
let $(\lambda_k)$ be a sequence such that 
\begin{equation*}
\lambda_k\rightarrow+\infty\quad  \text{and}\quad \sup_{k}\Big(\lambda_k\int_Q|w_k-w|\dx\Big)\leq\lambda.
\end{equation*}
Let $S_k^{i_k}\in \Z$ be a solution to
\begin{equation*}
\min\Bigl\{F_{\e_k}(w_k,S_k^i)+\lambda_k\int_{S_k^i}|w_k-w|\dx \colon S_k^i\subset Q\Bigr\}.
\end{equation*}
We first restrict $w_k$ to $S_k^{i_k}$, and then we extend it to the strip 
$[(-\e_k(1/8+\varrho),\e_k(1/8+\varrho))\times(-1/2,1/2)]+(\e_k i,0)$
by reflection with respect to the axis $x_1=i_k \e_k$; we denote by $\tilde{w}_k$ such an extension. 
Then we extend further $\tilde{w}_k$ by periodicity in the $x_1$-variable to the whole $\R\times(-1/2,1/2)$,
with period $\e_k(1/4+2\varrho)$. 
The penalization term ensures that $\tilde{w}_k\to w$ in $L^1(Q)$. 

Consider now the sets $\Om:=(-1/8-\varrho,1/8+\varrho)\times(-1/2,1/2)$,
\begin{equation*}\begin{split}
\tilde{D}&:=[-\tfrac{1}{8},\tfrac{1}{8}]\times[-\tfrac{1}{8},\tfrac{1}{8}],\\
\tilde{P}&:=\R^2 \setminus \bigcup_{(i,j)\in\Z^2} \bigl(\tilde{D}+\bigl(\bigl(\tfrac{1}{4}+2\varrho\bigr)i,j\bigr)\bigr)
\end{split}\end{equation*}
(see Figure \ref{fig7}), and the functional $\tilde{F}_\e:L^1(\Omega)\to[0,+\infty]$ defined as
\begin{equation*}
\tilde{F}_\e(u,\Om):=
\begin{cases}
\displaystyle{\int_{\Om \cap \e \tilde{P}}|\nabla u|^2\dx+\e\int_{\Om\setminus\e \tilde{P}}|\nabla u|^2\dx+\HH^1(S_u)} 
& \text{if }\ u\in SBV^2(\Om), \cr
+\infty & \text{otherwise in } L^1(\Om).
\end{cases}
\end{equation*}
Note that $\tilde{P}$ is connected. By construction of the sequence $(\tilde{w}_k)$ we have 
\begin{equation}\label{teor3 stima1}
F_{\e_k}(w_k,S_k)+\lambda_k\int_{S_k}|w_k-w|\dx \geq\frac{1}{2}\tilde{F}_{\e_k}(\tilde{w}_k,\Om),
\end{equation}
while by \cite[Theorem 2]{BLZ}, with some slight modifications due to the different cell of periodicity and the different 
size of the soft inclusions, for $t$ larger than a threshold $t_0=t_0(\varrho)$
\begin{equation}\label{teor3 stima2}
\liminf_{k\rightarrow+\infty}\tilde{F}_{\e_k}(\tilde{w}_k,\Om)\geq\frac{1}{4}+2\varrho.
\end{equation}
By \eqref{teor3 stima1} and \eqref{teor3 stima2}, and being $\lambda$ arbitrary, we get 
\begin{equation*}
\liminf_{k\rightarrow+\infty}F_{\e_k}(w_k,S_k)\geq\frac{1}{8}+\varrho.
\end{equation*}
By repeating a similar estimate on the remaining part of $Q\setminus U_{\e_k}$, we have the full 
estimate~\eqref{second main estimate}.
\end{proof}

\section{Conclusions}

\noindent
Let us summarize estimates \eqref{main estimate colonne} and \eqref{second main estimate} in a comprehensive result.
\begin{mtheorem}
Given $\varrho\in(0,1/7)$, choose $t>0$ small enough so that \eqref{vicino} is verified for $\eta=4\varrho^2$, and $\delta>0$ 
small enough so that the solutions to the problem \eqref{min prob}-\eqref{constant} are not affine and discontinuous. 
Let $w$ be one of this solutions, with discontinuity set $(-1/2,1/2)\times\{\hat{x}_2\}$ for a certain $\hat{x}_2\in[-\delta/2,\delta/2]$, 
and $(w_k)$ one of its recovery sequence. Moreover, let $v:=t'\chi_{(-1/2,1/2)\times(\hat{x}_2,1/2)}$ and $(v_k)$ be a sequence
in $SBV^2(Q)$ converging to $v$ in $L^1(Q)$. If $t'>t$ is large enough and
\begin{equation}\label{irreversibile}
S_{v_k}\supset S_{w_k},
\end{equation}
then
\begin{equation}\label{finale}
\liminf_{k\rightarrow+\infty}F_{\e_k}(v_k,Q)\geq\frac{1}{2}+\frac{1}{\sqrt{2}}+(3-4\sqrt{2})\varrho.
\end{equation}
\end{mtheorem}

\bigskip

Since $\varrho$ can be taken arbitrarily small, \eqref{finale} show that the toughness of the material increases
from one to $1/2+1/\sqrt{2}$.
From the physical point of view, the explanation is that the bridging of the soft inclusions, being energetically 
favorable when the opening of the macroscopic crack is small, originates a deflection of the crack path with respect 
to the straight one. 
Because of the irreversibility of the crack process, this deflection persists also when the 
opening of the crack is large and a straight path should be energetically favorable with respect to the deflected one.

This behavior cannot be captured by the $\Gamma$-limit $F$, since it is obtained by a minimization problem 
at microscopic level for any fixed opening of the crack. A general effective model that takes into account the 
irreversibility of the crack process at microscopic level (i.e., condition \eqref{irreversibile}) should provide
accordingly to \eqref{finale} an effective surface energy density $g_{\eff}$ such that 
\begin{equation*}
g_{\eff}(t,e)=\frac{1}{2}+\frac{1}{\sqrt{2}}>1=g(t,e)
\end{equation*}
for $t$ large enough.
Note also that our result shows that for $(F_\e)$ homogenization and quasistatic evolution of cracks do not commute.
This is in contrast with what happens in the case of a family of functionals that do not depend on the opening of the crack,
but have standard growth conditions: not only the $\Gamma$-limit still does not depend on the opening, but homogenization 
and evolution commute (see \cite{Ponsimini}).

To conclude, some generalizations must be envisaged in order to combine $\Gamma$-convergence of energies and 
irreversibility of the crack process at microscopic level. However, this seems to be a challenging problem
at the moment.

\vspace{10pt}
\centerline{\textsc{\large{Acknowledgments}}}

\vspace{4pt}
\noindent
The author gratefully thanks Giuliano Lazzaroni and Caterina Ida Zeppieri for stimulating discussions.
This work has been supported by the ERC Advanced Grant 290888--\emph{QuaDynEvoPro}.

\vspace{4pt}


\end{document}